\newcommand{\Z}{\mathbb{Z}}
\newcommand{\N}{\mathbb{N}}
\newcommand{\R}{\mathbb{R}}
\newcommand{\intRtwo}{\int \limits_{\R^2}}
\newcommand{\wv}{\omega}
\newcommand{\sh}{\psi}
\newcommand{\diag}[2]{\begin{pmatrix}#1 & 0\\ 0& #2\\\end{pmatrix}}
\newtheorem{theorem}{Theorem}[section]
\newtheorem{definition}[theorem]{Definition}
\newtheorem{lemma}[theorem]{Lemma}
\DeclareMathOperator{\suppp}{supp \,}
\begin{document}
 
\title{Shearlet approximation of functions with discontinuous derivatives}
\author{Philipp Petersen}
\date{}
\maketitle
\begin{abstract}
We demonstrate that shearlet systems yield superior $N$-term approximation rates compared with wavelet systems of functions whose first or higher order derivatives are smooth away from smooth discontinuity curves. We will also provide an improved estimate for the decay of shearlet coefficients that intersect a discontinuity curve non-tangentially.
\end{abstract}


\noindent
\section{Introduction}

In applied harmonic analysis one important field of study is the design of suitable systems $(\varphi_n)_{n\in \N} \subseteq L_2(\R^2)$ that can efficiently represent functions $f\in L_2(\R^2)$ in the sense that $f = \sum_{n\in \N} c_n(f) \varphi_n$ for a sequence $(c_n(f))_{n\in \N}$. 
One particularly desirable feature of such a system is that by using only few elements it already yields decent approximations of functions taken from some subset of $L_2(\R^2)$. The quality of approximation within such a system is customarily measured in terms of the error of the best $N$- term approximation. 

The \emph{best $N$-term approximation} of $f$ is given by
\begin{equation*}
\sigma_N(f) = \inf \limits_{\substack{E_N \subset \N, |E_N| = N,\\ \tilde{f}_N = \sum \limits_{n \in E_N} c_n \varphi_n}} \|f- \tilde{f}_N \|_{L_2(\R^2)}^2.
\end{equation*}
If for some class of functions $\Theta \subseteq L_2(\R^2)$ one has $\sigma_N(f) = O(g(N))$ for $N\to \infty$ for all $f\in \Theta$ and some function $g$, we will call $g$ a \emph{best $N$-term approximation rate for $\Theta$}.

One particular type system $(\varphi_n)_{n\in \N}$ used efficient representation of functions are wavelet systems, see \cite{TenLectures} and the introduction in Subsection \ref{sec:Wavelets}. Wavelet systems have established themselves as a standard tool for image analysis. The wavelet construction is based on \emph{isotropic} scaling of a generator function, which implies that the supports of all elements $\varphi_n$ have the same aspect ratio. Due to this construction wavelets perform poorly when representing functions that contain anisotropic components. For instance a function could have a discontinuity along a smooth curve. In this event wavelets yield non-optimal approximation rates as we will recall in Subsection \ref{sec:CartoonApprox}. 

To overcome this shortcoming shearlets were introduced in \cite{GKL2006}. These systems constitute an excellent tool for the approximation or representation of natural images due to the fact that they provide almost optimal best $N$-term approximation rates for functions that have discontinuities along smooth curves. We will give more details on shearlet systems in Subsection \ref{sec:Wavelets}. 

Naturally one can pose the question about approximation rates of wavelets and shearlets for other classes of functions which exhibit anisotropic structures. Let, for example, $u$ be the solution of the operator equation 
$$ L u = f,$$
where the data $f$ exhibits a discontinuity along a smooth curve and $L$ is an elliptic differential operator. The approximation rates of shearlets for such functions have not been studied yet, despite the fact, that good approximation properties for such functions are crucial for the design of optimal adaptive solvers for elliptic partial differential equations. In fact, it is well established, see \cite{Ste2003, DahFR2007, cohen2001adaptive}, that using wavelet systems as ansatz functions, certain partial differential equations (PDEs) can be solved with an adaptive strategy with a computational complexity depending on the best $N$-term approximation rate of the solution of the PDE admitted by the chosen system. In view of applications of shearlet systems for the discretization of PDEs it is thus important to know whether the best $N$-term approximation rate of shearlets outperforms that of wavelets also for classes of functions that contain solutions of elliptic partial differential equations. In fact, if 
this were not the case, it would not be worthwhile to study adaptive frame methods with shearlets instead of wavelets.

Motivated by these considerations we examine functions with first or higher order derivatives which are smooth away from smooth discontinuity curves. We will first consider functions which have a $l$-th order derivative which is cartoon-like in the classical sense, i.e., it is twice continuously differentiable apart from a twice continuously differentiable discontinuity curve. 
Cartoon-like functions are a standard model in image processing to model natural images. They where first analyzed in \cite{DCartoonLikeImages2001}. By now analyzing the approximation rates for the class of cartoon-like functions was established as the standard benchmark problem to determine the quality of a representation system, such as shearlets \cite{KLcmptShearSparse2011, GuoL2007}, but also curvelets, \cite{CurveletsIntro}, and more general systems, \cite{PMolecules}.

We will observe, that shearlets yield a superior best $N$-term approximation rate of such functions of $O(N^{-{l-2}})$ for $N\to \infty$ (Theorem \ref{thm:1}), when compared to wavelets whose approximation rate cannot be faster than $O(N^{-{l-1}})$ for $N\to \infty$ (Theorem \ref{thm:0}). However, we will see, that not the discontinuity but the the regularity assumption on the smooth parts in the cartoon-like model limits the approximation rates. 
Hence we introduce a smoother cartoon-like model in Subsection \ref{sec:DiffCartoon} and we will observe in Theorem \ref{thm:2} that the approximation rates by shearlets improve drastically to $O(N^{-2l - 7/2})$ for $N\to \infty$, while those of wavelet systems remain unchanged at $O(N^{-{l-1}})$ for $N\to \infty$. 
In order to obtain the approximation rate of Theorem \ref{thm:2} we prove an improved estimate for shearlet coefficients that do not intersect a discontinuity curve tangentially in Lemma \ref{lem:decayTheorem}.

Contemplating the results of this paper in the context of adaptive frame methods discussed above, it is fair to say that the fact that shearlets yield improved approximation rates over wavelets for functions with cartoon-like derivatives can serve as a justification for the study of adaptive frame methods with shearlet systems instead of wavelets systems. We expect that these results will trigger the developement of new adaptive frame methods based on shearlet systems.

\section{Preliminaries}
\subsection{Notation}
For $0< p\leq \infty$, we will denote by $L_p$ the usual Lebesque spaces of $p-$times integrable functions and by $\ell_p$ we denote the corresponding sequence spaces. For $l\in \N$ we denote by $W^l_p$ the spaces of $l-$times weakly differentiable functions with derivatives in $L_p$ and by $C^l$ the spaces of $l-$times continuously differentiable functions. For a set $B \subset \R^2$ we use $\partial B$ to describe its boundary and $\chi_B$ is its characteristic function. For two functions $h,g$ we write $h \lesssim g$ is there exists a constant $C$ such that $h(x) \leq C g(x)$ for all $x$ in the domain of $h,g$. 
\subsection{Wavelets and shearlets}\label{sec:Wavelets}

2D Wavelet systems are constructed from integer translations and dilations of a generator function $\wv\in L_2(\R^2)$, i.e. for $j \in \N$ and $m\in \Z^2$
$$
\wv_{j,m}(x): = 2^{j}\wv(2^j x - m)\text{ for all } x\in \R^2.
$$
It can be shown, that there exist $\wv^0,\wv^1, \wv^2, \wv^3\in L_2(\R^2)$ such that the \emph{wavelet system}
$$
\mathcal{W}: = \{\wv_{0,m}^0,\ m\in \Z^2\} \cup \{\wv^i_{j,m},\ j\geq 0,\ m\in \Z^2,\ i= 1,2,3\}
$$
yields an orthonormal basis for $L_2(\R^2)$, see \cite{TenLectures}. We see that the scaling of the wavelet elements is \emph{isotropic}. In the construction of shearlets this isotropic scaling is replaced by parabolic scaling in combination with a shearing matrix. In particular, shearlet systems are constructed using the following two matrices:
\begin{equation*}
S_k = \begin{pmatrix} 1 & k \\ 0 & 1 \end{pmatrix}, \text{ and } A_j = \begin{pmatrix} 2^j & 0 \\ 0 & 2^{\frac{j}{2}} \end{pmatrix}, \quad \text{ where } k,j \in \Z.
\end{equation*}
We use \emph{cone-adapted shearlet systems} which are defined as follows.
\begin{definition}\cite{KLcmptShearSparse2011}
Let $\phi, \psi \in L_2(\R^2)$, $c= [c_1,c_2]^T \in \R^2$ with $c_1,c_2>0$. Then the \emph{(cone-adapted) shearlet
system} is defined by
\begin{equation*}
\mathcal{SH}(\phi, \psi, \tilde{\psi}, c) = \Phi(\phi, c_1) \cup \Psi(\psi, c) \cup \tilde{\Psi}(\tilde{\psi}, c),
\end{equation*}
where
\begin{eqnarray*}
\Phi(\phi, c_1) &:=& \left \{ \sh_{0,0,m,0} = \phi(\cdot - c_1 m) :m\in \Z^2 \right\},\\
\Psi(\psi, c) &:=& \left\{ \sh_{j,k,m,1} = 2^{\frac{3j}{4}}\psi(S_k A_{j}\cdot -  M_c m ): j\in \N_0, |k| \leq   2^{\left\lceil\frac{j}{2}\right\rceil}, m\in \Z^2 \right\},\\
\tilde{\Psi}(\tilde{\psi}, c) &:=& \left\{ \sh_{j,k,m,-1} = 2^{\frac{3j}{4}}\tilde{\psi}(S_k^T \tilde{A}_{j}\cdot -  M_{\tilde{c}} m ): j\in \N_0, |k| \leq
2^{\left\lceil\frac{j}{2}\right\rceil}, m\in \Z^2 \right\},
\end{eqnarray*}
with $\tilde{\psi}(x_1,x_2) = \psi(x_2,x_1)$, 
$$
M_c:= \diag{ c_1}{ c_2 }, M_{\tilde{c}} = \diag{ c_2}{c_1 }, \text{ and }\tilde{A}_{j} = \diag{2^{\frac{j}{2}}}{2^{j}}.
$$
\end{definition}

For cone-adapted shearlet systems we will employ the index set $\Lambda := \{(j,k,m,\iota): |\iota| j \geq j\geq 0, |k|\leq |\iota| 2^{\frac{j}{2}}, m\in \Z^2, \iota = \{1,0,-1\} \}$. Let furthermore $d_{[-\pi/2, \pi/2]}$ denote the following metric on the torus $\mathbb{T} = [-\pi/2, \pi/2]$: 
$$d_{[-\pi/2, \pi/2]}(a, b) := \min (|a-b|, \pi + \min\{a,b\} - \max\{a,b\}).$$
A shearlet is called \emph{separable}, if $\psi(x_1,x_2) = \psi^1(x_1)\phi^1(x_2)$ for two functions $\psi^1, \phi^1 \in L_2(\R)$.

Under certain assumptions, see \cite{KGLConstrCmptShear2012} cone-adapted shearlet systems can form a \emph{frame}, i.e. there exist $0< c_1 \leq c_2 <\infty$ such that 
\begin{align*}
c_1 \|f\|_2 \leq \sum_{\lambda \in \Lambda} |\langle f, \sh_\lambda \rangle|^2 \leq c_2 \|f\|_2.
\end{align*}
This implies, that there exists a \emph{dual frame} $(\sh_\lambda^d)_{\lambda \in \Lambda}$ such that 
\begin{align}\label{eq:frameProp}
T^d: \ell^2 \to L_2(\R^2), \qquad (c_\lambda)_{\lambda\in \Lambda} \mapsto T^d((c_\lambda)_{\lambda\in \Lambda}):=\sum_{\lambda \in \Lambda} c_\lambda \sh_\lambda^d,
\end{align}
is a bounded operator and $f = T^d((\langle f, \sh_\lambda \rangle)_{\lambda \in \Lambda})$ for all $f\in L_2$, see \cite{Chr}.

\subsection{Approximation of cartoon-like functions}\label{sec:CartoonApprox}

The improvement of shearlets over wavelets becomes evident, when one considers their approximation rates for classes of functions that model natural images, so-called cartoon-like functions.
\begin{definition}
The set of \emph{cartoon-like functions} is given by 
\begin{align*}
\mathcal{E}^2(v) := \{f \in L_2(\R^2): f = g_1 + \chi_B g_2, \text{ where } g_1,g_2 \in C^2, \suppp g_1,g_2 \subset (0,1)^2\\
\text{ and } B \subset (0,1)^2 \text{ with } \partial B \in C^{2} \text{ and }\partial B \text{ has curvature bounded by $v$} \}.
\end{align*}
We call $\partial B$ the \emph{discontinuity curve of} $f$. 
\end{definition}
It can be shown that wavelets only achieve a best $N$-term approximation rate of $O(N^{-1})$ for the class of cartoon-like functions, \cite{CurveletsIntro}. On the other hand in  \cite{KLcmptShearSparse2011} and \cite{GuoL2007} it was shown that cone-adapted shearlet systems achieve a best $N$-term approximation rate of $O(N^{-2}\log(N)^{3})$, which - up to the $\log$ factor - is the optimal approximation rate that any system can achieve, see \cite{DCartoonLikeImages2001}.

\section{Results}

We are concerned with approximation rates of functions that have some first or higher order derivative which is cartoon-like. 
Let $l \in \N$, and 
\begin{equation*}
\mathcal{E}^{l,2}(v)  : = \left\{u\in W^{l}_2(\R^2) \ : \ \frac{\partial^\alpha}{\partial x^\alpha} u = u^\alpha \text{ and }u^\alpha \in \mathcal{E}^2(v) \text{ for all }  |\alpha| = l\right\}.
\end{equation*}
Note that in the definition of cartoon-like functions $g_2 = 0$ is possible, so that not all $l-$th derivatives of $u\in \mathcal{E}^{l,2}(v)$ need to have a discontinuity curve.
We will show, that the best $N$-term approximation rate for $\mathcal{E}^{l,2}(v)$ by wavelet systems is bounded from below by $g(N) = N^{-(l+1)}$ and that there exists a best $N$-term approximation rate by shearlets of the order of $g(N) = N^{-(l+2)}$.

\subsection{Wavelet approximation rates}
In order to analyze the approximation properties of wavelets we measure the smoothness of the function to be approximated in a Besov scale. We use the following definition of a Besov space, which can be found in \cite{Triebel1}:

For $h >0$ and $f\in L_2(\R^2)$ we define $\Delta_h f = f(x+h) - f(x)$. For $s = k + r$, where $k\in \mathbb N$ and $0< r \leq 1$ and $1\leq p,q<\infty$ the Besov space $B_{p,q}^s$ is defined by
\begin{align*}
B_{p,q}^s :&= \{f\in W_{p}^{k}: \ \|f\|_{B_{p,q}^s} <\infty \}\\
\|f\|_{B_{p,q}^s}: &= \|f\|_{W_{p}^{k}} + \sum_{|\beta| = k}\left( \int_{\mathbb R^2}  \frac{\|\Delta_h^2 D^\beta f\|_{L_p(\mathbb R^2)}^q}{|h|^{n+q r }} dh\right)^{\frac{1}{q}}.
\end{align*}

Many wavelet systems on a domain on $\R^2$ admit a characterization of Besov spaces $B_{q,q}^s$ by non-linear approximation rates in the following sense:
\begin{align} \label{eq:BesovChar}
\sum_{n = 1}^\infty [n^{s/2}  \sigma_n(u)^{1/2}]^r \frac{1}{n} <\infty \Leftrightarrow u \in B^{s}_{q,q},
\end{align}
for $1/q = s/2 + 1/2$ and $r\geq s$, see for instance \cite{DeJP1992, Coh2000}.

Using \eqref{eq:BesovChar} we can now find an upper bound for the approximation rate of wavelets for functions in $\mathcal{E}^{m,2}_p(v)$.
\begin{theorem}\label{thm:0}
Let $l \in \N$, $0<p<\infty$, $u\in W^{l}_p$ such that for some $|\alpha| = l$ we have 
\begin{equation*}
\frac{\partial^\alpha}{\partial x_i^\alpha} u = \chi_{D},
\end{equation*}
where $D$ is a bounded subset of $\R^2$ with smooth boundary curve $\partial D$ that has bounded curvature. Let $\mathcal{W}$ be a wavelet system such that \eqref{eq:BesovChar} holds. 
Then for all $\epsilon>0$ we have that $\sigma_N(u) \not \in  O(N^{-(l+1)-\epsilon})$.
\end{theorem}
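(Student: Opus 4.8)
The plan is to argue by contradiction: assume $\sigma_N(u)$ decays faster than the claimed rate, and use the approximation-theoretic characterization \eqref{eq:BesovChar} to manufacture more Besov regularity for $u$ than the identity $\partial^\alpha u = \chi_D$ can possibly allow. Concretely, suppose $\sigma_N(u) \in O(N^{-(l+1)-\epsilon})$ for some $\epsilon > 0$. Since in \eqref{eq:BesovChar} the quantity $\sigma_n(u)^{1/2}$ is the genuine $L_2$-error, the summand $[n^{s/2}\sigma_n(u)^{1/2}]^r n^{-1}$ is then controlled by a term of the form $n^{r(s-(l+1)-\epsilon)/2 - 1}$, whose exponent is negative precisely when $s$ stays below the decay exponent; this one-line comparison shows the series converges for a range of $s$ reaching strictly past $l+1$. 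For each such $s$ I read off the matching integrability $q$ from the relation $1/q = s/2 + 1/2$ (with $r \geq s$) and conclude from \eqref{eq:BesovChar} that $u \in B^s_{q,q}$.

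Next I would transport this regularity onto $\chi_D$. Differentiation of order $l$ maps $B^s_{q,q}$ boundedly into $B^{s-l}_{q,q}$, so the hypothesis $\partial^\alpha u = \chi_D$ forces $\chi_D \in B^{s-l}_{q,q}$. The crux is then the sharp regularity of the indicator of a nice domain: the characteristic function of a bounded set $D$ whose boundary $\partial D$ is smooth with bounded curvature has critical Besov smoothness equal to $1/q$, i.e. $\chi_D \in B^t_{q,q}$ when $t < 1/q$ but $\chi_D \notin B^t_{q,q}$ once $t \geq 1/q$. Feeding $t = s-l$ together with $1/q = s/2 + 1/2$ into this threshold bounds the admissible smoothness of $u$ from above, and this ceiling is incompatible with the membership produced in the previous paragraph. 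The resulting contradiction pins down the best attainable exponent and yields the stated lower bound on $\sigma_N(u)$.

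The main obstacle is the negative half of the regularity statement for $\chi_D$, namely that its Besov (quasi-)seminorm diverges at the critical smoothness. I would prove this by localizing near $\partial D$: after a smooth change of coordinates the boundary becomes a graph, and up to a controlled smooth remainder $\chi_D$ reduces to the indicator of a half-plane, for which the defining difference integral $\int \|\Delta_h^2 \chi_D\|_{L_q}^q / |h|^{2 + qr}\,dh$ can be estimated directly and shown to diverge at $t = 1/q$. Bounded curvature is exactly what lets me use a single uniform local model along the whole curve and patch the estimates together with a partition of unity. The companion positive bound $\chi_D \in B^t_{q,q}$ for $t < 1/q$, the boundedness of the lifting operator, and the elementary series estimate in the first paragraph are all routine by comparison.
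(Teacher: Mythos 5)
Your overall strategy---contradiction via the characterization \eqref{eq:BesovChar}, lifting by $\partial^\alpha$, and the Besov regularity of $\chi_D$---is the same as the paper's, but the final step, where you claim the contradiction, genuinely fails for $l\geq 1$. Your sharp threshold (which is the correct one) reads: $\chi_D \in B^{t}_{q,q}$ exactly when $t < 1/q$. Plugging in $t = s-l$ and $1/q = s/2+1/2$, membership of $\chi_D$ in $B^{s-l}_{q,q}$ is obstructed only when
\begin{equation*}
s - l \;\geq\; \frac{s}{2} + \frac{1}{2}, \qquad \text{i.e.}\qquad s \;\geq\; 2l+1,
\end{equation*}
whereas the assumption $\sigma_N(u) \in O(N^{-(l+1)-\epsilon})$ only yields $u \in B^{s}_{q,q}$ for $s < l+1+\epsilon$. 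Since $l+1+\epsilon < 2l+1$ whenever $\epsilon < l$, every smoothness level $s$ you can actually reach lies strictly below your ceiling $2l+1$: the two statements are perfectly compatible, and no contradiction follows. What your argument proves is only the weaker assertion $\sigma_N(u) \notin O(N^{-(2l+1)-\delta})$ for $\delta>0$ (equivalently, the theorem's conclusion for $\epsilon > l$); for the stated range, all $\epsilon>0$ with $l\geq 1$, the proof does not go through. The structural reason is that along the adaptivity scale $1/q = s/2+1/2$ the integrability index $q$ shrinks as $s$ grows, so the critical smoothness $1/q$ of $\chi_D$ \emph{rises} with $s$; a jump in an $l$-th order derivative is simply not a strong enough obstruction in these small-$q$ spaces.

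It is worth seeing how this compares with the paper's own proof: there, the contradiction is obtained from the blanket claim that $\chi_D \notin B^{1}_{q,q}$ for \emph{all} $0<q<\infty$, applied at the single point $s = l+1$, $1/q = (l+2)/2$, i.e.\ $q = 2/(l+2) \leq 2/3$. Your sharp threshold contradicts that claim precisely in the regime where it is needed: for $q<1$ one has $1/q>1$, so $\chi_D$ \emph{does} lie in $B^1_{q,q}$; indeed $\|\Delta_h^2 \chi_D\|_{L_q}^q \sim |h|$ makes the defining integral behave like $\int_0^1 r^{-q}\,dr < \infty$ for $q<1$. So you cannot close your gap by borrowing the paper's stronger non-membership claim---your own (correct) regularity analysis shows it is false for $q<1$. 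Repairing the argument would require exhibiting an obstruction for $u$ itself, not merely for $\partial^\alpha u = \chi_D$: one would need $u \notin B^{l+1}_{q,q}$ with $q = 2/(l+2)$, and this cannot follow from the derivative jump across $\partial D$ alone, since a direct wavelet-coefficient count (about $2^j$ coefficients of size $2^{-j(l+1)}$ per scale) shows that such a singularity by itself is compatible with membership in $B^{s}_{q,q}$, $1/q = s/2+1/2$, for all $s < 2l+1$.
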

\begin{proof}
By a simple computation we obtain that $\chi_{D} \not \in B^{1}_{q,q}$ for any $0<q <\infty$. Consequently, $u \not \in B^{l+1}_{q,q}$ for any $0<q<\infty$. If 
$\sigma_N(u) \lesssim N^{-(l+1)-\epsilon}$ for $N\to \infty$ for some $\epsilon<0$ it would follow with $s = l+1$, $1/q = (s + 1)/2$ applied to \eqref{eq:BesovChar} that $u \in B^{l+1}_{q,q}$, which is a contradiction.
\end{proof}

\subsection{Shearlet approximation rates}
Now let us prove that for $0\leq l \in \N$, shearlet systems obtain a higher approximation rate than wavelets for functions in $ \mathcal{E}^{l,2}(v)$. 

\begin{theorem} \label{thm:1}
Let $l\geq 0$, and $0\leq v<\infty$, and $u\in\mathcal{E}^{l,2}(v)$ and let $\mathcal{SH}(\phi,\sh, \tilde{\sh}, c) = (\sh_{\lambda})_{\lambda \in \Lambda}$ be a shearlet system with shearlet generator $\sh  = \left(\frac{\partial}{\partial x_1}\right)^l\theta$ such that $\theta\in L_2(\R^2)$ has compact support, is separable and
\begin{compactenum}[(i)]
 \item $|\hat{\theta}(\xi)| \lesssim \min(1, |\xi_1|^\tau) \min(1,|\xi_1|^{-\nu}) \min(1, |\xi_2|^{-\nu})$ \text{ and }
 \item $|\frac{\partial}{\partial \xi_2} \hat{\theta}(\xi) | \leq |h(\xi_1) (1+\frac{\xi_2}{\xi_1})^{-\nu}|$,
\end{compactenum}
where $\tau>5, \nu\geq 4, h\in L_1(\R)$. 
Further assume that $(\sh_{\lambda})_{\lambda \in \Lambda}$ forms a frame. Then 
\begin{align}\label{eq:shearEstimate}
\|u-\sum_{\lambda \in E_N} \langle u, \sh_\lambda \rangle \sh_\lambda^d \|_{L_2(\R^2)}^2 \lesssim N^{-(l+2)}\log(N)^3,
\end{align}
where $E_N \subset \Lambda$ contains the indices of the $N$ largest coefficients $|\langle u, \sh_\lambda \rangle|$ and $\sh_\lambda^d$ is the canonical dual frame element of $\sh_\lambda$. The $\log$ term in \eqref{eq:shearEstimate} can be dropped if $l\geq 1$.
\end{theorem}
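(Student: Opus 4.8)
The plan is to transfer the problem from $u$ to its cartoon-like derivatives by integration by parts, and then to exploit the scale-dependent gain that this transfer produces. I would first use the special form $\sh=\partial_{x_1}^l\theta$ of the generator. Fix $\lambda=(j,k,m,1)$ in the first cone and set $\theta_\lambda:=2^{3j/4}\theta(S_kA_j\cdot-M_cm)$, i.e. the shearlet built from $\theta$ rather than from $\sh$. Since the only $x_1$-dependent entry of $S_kA_j$ is the $(1,1)$-entry $2^j$, the chain rule gives $\sh_\lambda=2^{-jl}\partial_{x_1}^l\theta_\lambda$, and because $\theta_\lambda$ is compactly supported while $u\in W_2^l$, integration by parts yields
\[
\langle u,\sh_\lambda\rangle=(-1)^l\,2^{-jl}\,\langle u^\alpha,\theta_\lambda\rangle,\qquad \alpha=(l,0).
\]
The same computation in the second cone (with $\tilde{\sh}=\partial_{x_2}^l\tilde{\theta}$ and $\tilde A_j$) produces $\langle u,\sh_\lambda\rangle=(-1)^l2^{-jl}\langle u^\beta,\tilde{\theta}_\lambda\rangle$ with $\beta=(0,l)$. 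By hypothesis $u^\alpha,u^\beta\in\mathcal{E}^2(v)$, and $\{\theta_\lambda\}$, $\{\tilde{\theta}_\lambda\}$ are exactly the two cone systems generated by $\theta$. For the coarse scale I would observe that, all $l$-th derivatives of $u$ being supported in $(0,1)^2$ and $u\in L_2$, the function $u$ is itself compactly supported, so only finitely many elements $\phi(\cdot-c_1m)$ meet its support; these contribute $O(1)$ bounded coefficients which are irrelevant to the asymptotic rate.

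Next I would invoke the cartoon analysis for the generator $\theta$. Conditions (i)--(ii) on $\hat\theta$ are precisely the hypotheses under which compactly supported shearlets sparsely represent cartoon-like functions \cite{KLcmptShearSparse2011}, so the coefficients $d_\lambda:=\langle u^\alpha,\theta_\lambda\rangle$ admit the scale-resolved estimates used there. Splitting, at each scale $j$, the indices whose shearlet support meets the discontinuity curve $\partial B$ from those that do not, one obtains a per-scale energy bound $\sum_{j(\lambda)=j}|d_\lambda|^2\lesssim 2^{-j}$ on the $\sim 2^{j/2}$ curve indices (each of size $\lesssim 2^{-3j/4}$), and $\sum_{j(\lambda)=j}|d_\lambda|^2\lesssim 2^{-4j}$ on the $\sim 2^{2j}$ remaining indices, the latter encoding the $C^2$ regularity of the regular parts $g_1,g_2$. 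These are the two ingredients that combine to the $O(N^{-2}\log(N)^3)$ rate in the unweighted case $l=0$.

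Finally I would reinsert the factor $2^{-jl}$. Writing $c_\lambda=\langle u,\sh_\lambda\rangle$, the reduction multiplies each scale-$j$ energy by $2^{-2jl}$, giving $\sum_{j(\lambda)=j}|c_\lambda|^2\lesssim 2^{-(2l+1)j}$ on the curve part and $\lesssim 2^{-(2l+4)j}$ on the regular part. Bounding the distribution function by $\#\{\lambda:|c_\lambda|>\epsilon\}\le\sum_j\min(n_j,E_j\epsilon^{-2})$, where $n_j$ and $E_j$ denote the scale-$j$ count and energy, and carrying out the sum, shows the regular indices to be weak-$\ell^{2/(l+3)}$ and the curve indices weak-$\ell^{2/(4l+3)}$; hence the decreasing rearrangement satisfies $\sum_{n>N}|c_n^*|^2\lesssim N^{-(l+2)}$ from the regular part and $\lesssim N^{-(4l+2)}$ from the curve part, the regular part dominating for every $l\ge0$. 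The frame property \eqref{eq:frameProp} then converts this coefficient tail into the asserted bound \eqref{eq:shearEstimate} through $\|u-\sum_{\lambda\in E_N}\langle u,\sh_\lambda\rangle\sh_\lambda^d\|_{L_2(\R^2)}^2\lesssim\sum_{\lambda\notin E_N}|c_\lambda|^2$; the logarithmic factors are needed only for the borderline summation at $l=0$ and may be dropped once $l\ge1$, when the weighted sequences lie strictly inside the relevant weak-$\ell^p$ range.

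The step I expect to be the main obstacle is the scale-resolved energy estimate for $d_\lambda$: establishing the $2^{-4j}$ decay on the regular indices requires genuinely using the $C^2$ smoothness of $g_1,g_2$ (a naive pointwise coefficient bound of order $2^{-11j/4}$ is too weak once multiplied by the $\sim 2^{2j}$ regular indices and would already spoil the rate), while the curve estimate requires careful bookkeeping of the contribution of all $\sim 2^{j/2}$ admissible shearings, which is also where the logarithmic factors originate. Once these bounds are in hand the reweighting and summation are routine, and the qualitative upshot — that after the $2^{-jl}$ weighting it is the finite smoothness of the regular parts, and not the discontinuity, that caps the rate at $N^{-(l+2)}$ — follows transparently.
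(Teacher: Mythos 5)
Your overall strategy is the same as the paper's: transfer the derivatives onto $u$ via integration by parts (your identity $\langle u,\sh_\lambda\rangle=(-1)^l2^{-jl}\langle u^\alpha,\theta_\lambda\rangle$ is exactly the paper's estimate \eqref{eq:decayForDeriv}), treat the coarse scale and the non-intersecting indices as you do, and convert the coefficient tail into \eqref{eq:shearEstimate} by the frame property \eqref{eq:frameProp}. However, there is a genuine gap in your bookkeeping of the indices whose support meets the discontinuity curve. You assert that at scale $j$ there are only $\sim 2^{j/2}$ such indices, each of size $\lesssim 2^{-3j/4}$. The count is wrong: a curve of finite length crosses $\sim 2^{j}$ of the thin $2^{-j}\times 2^{-j/2}$ supports for each of the $\sim 2^{j/2}$ shears, so there are $\sim 2^{3j/2}$ curve-meeting indices per scale; only the \emph{tangentially aligned} ones (shear within $\sim 2^{-j/2}$ of the curve's tangent direction) number $\sim 2^{j/2}$. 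The remaining $\sim 2^{3j/2}$ non-aligned intersecting indices carry coefficients of size $\lesssim 2^{-(l+9/4)j}$; they cannot be absorbed into your ``curve'' group (wrong count) nor into your ``regular'' group (their per-scale energy is $\sim 2^{-(2l+3)j}$, which exceeds your claimed regular energy $2^{-(2l+4)j}$).

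This miscount is not cosmetic, because your distribution-function bound $\#\{\lambda:|c_\lambda|>\epsilon\}\le\sum_j\min(n_j,E_j\epsilon^{-2})$ uses $n_j$ explicitly. If you correct the count to $n_j\sim 2^{3j/2}$ but keep only the aggregate energy $E_j\sim 2^{-(2l+1)j}$, the sum yields weak-$\ell^{6/(4l+5)}$ and a tail of only $N^{-(4l+2)/3}$, e.g.\ $N^{-2}$ for $l=1$, which is strictly weaker than the required $N^{-(l+2)}=N^{-3}$; so the argument as written does not prove the theorem. The repair is precisely the finer decomposition the paper performs: split the intersecting indices into the tangentially aligned set $\Lambda^{2,a}$ (count $\lesssim 2^{j/2}$, coefficient size $\lesssim 2^{-(l+3/4)j}$) and the non-aligned set $\Lambda^{2,b}$ (count $\lesssim 2^{3j/2}$, coefficient size $\lesssim 2^{-(l+9/4)j}$), and sum $p$-th powers over each group separately with $p=2/(l+3)$, then apply Stechkin's lemma \eqref{eq:Stechkin}. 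With this correction the aligned part gives a tail $\lesssim N^{-(4l+2)}$ (your figure, valid only for this subgroup), the non-aligned part gives $\lesssim N^{-(4l+6)/3}$, and both are at least as good as the regular part's $N^{-(l+2)}$ for all $l\ge0$, with equality exactly at $l=0$ — which is indeed where the $\log$ factors live; the rest of your argument then goes through as written.
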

\begin{proof}

For $l = 0$ the result is Theorem 1.3 of \cite{KLcmptShearSparse2011}. In the sequel we assume $l\geq 1$. Let $\gamma_1$ be the discontinuity curve of $u_1 := \left(\frac{\partial}{\partial x_1}\right)^l u$ and $\gamma_{-1}$ be the discontinuity curve of $u_{-1}= \left(\frac{\partial}{\partial x_2}\right)^l u$. We denote for $t\in \gamma_i$ the outer normal at $t$ of $\gamma_i$ by $n^i(t)$, $i= {1,-1}$. Now we decompose our index set $\Lambda$. First of all we denote by 
$$
\Lambda^0 = \{ (j,k,m,\iota) \in \Lambda: \iota = 0\}.
$$
By the compact support of $u$ and $\sh$ we have that $\#\{\lambda \in \Lambda^0: \langle u, \sh_{j,k,m,\iota}\rangle \neq 0 \} < \infty$. Furthermore we denote 
$$
\Lambda^1 : = \{(j,k,m,\iota)\in \Lambda \setminus \Lambda^0: \suppp \sh_{j,k,m,\iota} \cap \bigcup_{\iota = \{-1,1\}} \gamma_\iota = \emptyset \}.
$$ 
We denote by $\Lambda^{2,a}$ the indices in $\Lambda^{2}: = \Lambda \setminus (\Lambda^0 \cup \Lambda^1)$, such that for $(j,2^{j/2}s,m,\iota) \in \Lambda$ there exists some $t\in \suppp \sh_{j,2^{j/2}s,m,\iota} \cap \gamma_\iota$ such that we have  $s^{\iota}\in (\tan \rho_0-3\cdot2^{-j/2}, \tan \rho_0+3\cdot2^{-j/2})$ and $n^{\iota}(t)= \pm(\cos \rho_0, \sin \rho_0)$ for some $\rho_0 \in (-\pi/2,\pi/2)$. Additionally, there are non-aligned shearlet elements that intersect the discontinuity curve, we will call these indices $\Lambda^{2,b}$. We have $\Lambda^{2,b} = \Lambda \setminus (\Lambda^{0}\cup \Lambda^1 \cup \Lambda^{2,a})$.


Now let us observe the sizes of the index sets for fixed scale $j$ and we denote these index sets by $\Lambda^1_j,\Lambda^{2,a}_j, \Lambda^{2,b}_j$. 
We have that $\#\{(j,k,m,\iota) \in \Lambda^1_j:\langle u, \sh_{j,k,m,\iota}\rangle \neq 0\} \lesssim 2^{2j}$. 
Observe that, due to their bounded curvature, $\gamma_1$ and $\gamma_{-1}$ have finite length.
Hence we observe that
\begin{equation*}
|\Lambda^{2,a}_j| \lesssim 2^{\frac{j}{2}}, \quad |\Lambda^{2,b}_j| \lesssim 2^{\frac{3}{2} j}. 
\end{equation*}

\textbf{Part 1: }$\Lambda^0$:

There are only finitely many indices in $\Lambda^0$, i.e. we certainly have
\begin{equation*}
\sum_{n\geq N} c(u)^*_n  \lesssim N^{-(l+2)}, 
\end{equation*}
where $c(u)^*$ denotes a non-increasing rearrangement of $(|\langle u, \sh_{j,k,m,\iota} \rangle|^2)_{(j,k,m,\iota)\in \Lambda^0}$.

\textbf{Part 2: }$\suppp \sh_{j,k,m,\iota} \cap \bigcup_{\iota = \{-1,1\}} \gamma_\iota = \emptyset:$

In this case the scalar products $\langle u, \sh_{j,k,m,\iota} \rangle$ decay as if the function was in $C^{l+2}$. In other words, by invoking Proposition 2.1 of \cite{KLcmptShearSparse2011} (which was only stated for $l = 0$, but the extension is straightforward) this means that
\begin{equation}\label{eq:bottleneck}
\sum_{n\geq N} c(u)^*_n  \lesssim N^{-(l+2)}, 
\end{equation}
where $c(u)^*$ denotes a non-increasing rearrangement of $(|\langle u, \sh_{j,k,m,\iota} \rangle|^2)_{(j,k,m,\iota)\in \Lambda^1}$.

\textbf{Part 3: }$\suppp \sh_{j,k,m,\iota} \cap \bigcup_{\iota = \{-1,1\}} \gamma_\iota \neq \emptyset:$

Using partial integration and the compact support of $\theta$ one obtains 
\begin{align}
|\langle u, \sh_{j,k,m,\iota} \rangle |\lesssim 2^{-l j} |\langle u_\iota, \theta_{j,k,m,\iota} \rangle|. \label{eq:decayForDeriv}
\end{align}
Invoking Proposition 2.2. in \cite{KLcmptShearSparse2011}, which are the standard estimates for shearlets i.e. $(l = 0)$, in combination with \eqref{eq:decayForDeriv} yields that for $(j,k,m,\iota)\in \Lambda^{2,a}$
\begin{equation*}
|\langle u, \sh_{j,k,m,\iota}\rangle| \lesssim 2^{-(\frac{3}{4}+l)j}
\end{equation*}
and for $(j,k,m,\iota) \in \Lambda^{2,b}$ we have
\begin{equation*}
|\langle u, \sh_{j,k,m,\iota}\rangle| \lesssim 2^{-(\frac{9}{4}+l)j}.
\end{equation*}
Using $p = 2/(l+3)$ and the sizes of $\Lambda^{2,a}_j, \Lambda^{2,b}_j$ we can compute, that 
\begin{align}
\sum_{\Lambda^{2}} |\langle u, \sh_{j,k,m,\iota}\rangle|^p \leq& \sum_{\Lambda^{2,a}} |\langle u, \sh_{j,k,m,\iota}\rangle|^p +  \sum_{\Lambda^{2,b}} |\langle u, \sh_{j,k,m,\iota}\rangle|^p \nonumber \\
\lesssim &\sum_{j\in \N} 2^{\frac{3}{2} j} 2^{-p(\frac{9}{4}+l)j} +  \sum_{j\in \N} 2^{\frac{j}{2}} 2^{-p(\frac{3}{4}+l)j} <\infty.\label{eq:decofCoeffs}
\end{align}
Stechkin's lemma, see e.g. \cite{Dev1998}, states that 
\begin{equation}
(\sum_{n\geq N} |d_n|^2)^{\frac{1}{2}} \lesssim N^{-s}, \label{eq:Stechkin}
\end{equation}
if $(d_n)_n$ is a monotonically decreasing sequence such that $(d_n)_n \in \ell_p$ for $s = 1/p - 1/2$.
Using Stechkin's lemma, we obtain that $\sum_{n\geq N} c(u)^*_n  \lesssim N^{-(l+2)}$, where $c(u)^*$ denotes a non-increasing rearrangement of $(|\langle u, \sh_{j,k,m,\iota} \rangle|^2)_{(j,k,m,\iota)\in \Lambda^2}$. 
Using the estimates from Part 1,2, and 3 and invoking the dual frame characterization \eqref{eq:frameProp} yields
\begin{align*}
\|u-\sum_{\lambda \in E_N} \langle u, \sh_\lambda \rangle \sh_\lambda^d \|_{L_2(\R^2)}^2 \lesssim \sum_{n\geq N} c(u)^*_n  \lesssim N^{-(l+2)},
 \end{align*}
where $c(u)^*_n$ denotes a non-increasing rearrangement of $(|\langle u, \sh_{\lambda} \rangle|^2)_{\lambda \in \Lambda}$.
\end{proof}

\begin{figure}[htb]
 \includegraphics[width = \textwidth]{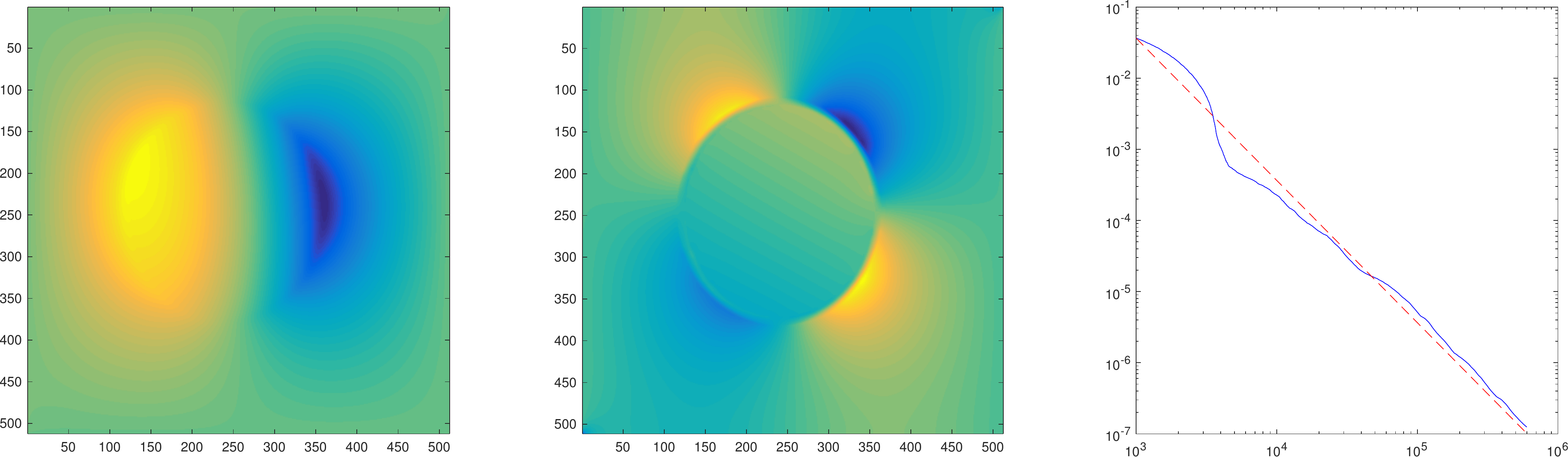}
 \put(-70, -2){\small $n$}
 \put(-155, 60){\rotatebox{90}{\small $(c(u)^*_n)^{\frac{1}{2}}$}}
 \caption{\textbf{Left:} Function $u$ with cartoon-like derivatives. \textbf{Middle:} Cartoon-like first derivative of $u$ in vertical direction. \textbf{Right:} Plot on a logarithmic scale of the predicted decay of $O(n^{-2})$ for $n\to \infty$ due to \eqref{eq:decofCoeffs} \color{red}(dashed red line) \color{black} and actual decay of $(c(u)^*_n)^{\frac{1}{2}}$ \color{blue} (solid blue line)\color{black}. For better  \color{black} }\label{fig:1}
\end{figure}

We give a numerical example to illustrate Theorem \ref{thm:1}. We depict in Figure \ref{fig:1} the shearlet coefficients of a function that has a cartoon-like derivative in $x_2$-direction. By \eqref{eq:decofCoeffs} these coefficients should decay as $O(N^{-2})$ for $N\to \infty$. In fact the actual decay matches very closely the predicted decay of $O(N^{-2})$ for $N\to \infty$. The experiments where made with a subsampled version of the shearlet decomposition algorithm of ShearLab, \cite{shearLab}.

\subsection{A different cartoon model}\label{sec:DiffCartoon}

We saw in the proof of Theorem \ref{thm:1} in \eqref{eq:bottleneck} that the main bottleneck for the decay of the shearlet coefficients is \eqref{eq:bottleneck} due to the regularity of the functions $g_1, g_2$ of the cartoon-like function $f = g_1 + \chi_D g_2$. We can examine what happens when we assume more regularity. To keep technicalities at a minimum, we only consider the case where $g_1,g_2 \in C^\infty$, but other cases can be studied similarly.
We study the following cartoon model: Let $1\leq l \in \N$, and 
\begin{equation*}
\mathcal{E}^{l,\infty}(v)  : = \left\{u\in W^{l}_2(\R^2) \ : \ \frac{\partial^\alpha}{\partial x^\alpha} u = u^\alpha \text{ and }u^\alpha \in \mathcal{E}^\infty(v) \text{ for all }  |\alpha| = l\right\},
\end{equation*}
where 
\begin{align*}
\mathcal{E}^\infty(v) = \{f \in L_2(\R^2): f = g_1 + \chi_B g_2, \text{ where } g_1,g_2 \in C^\infty, \suppp g_1,g_2 \subset (0,1)^2\\
\text{ and } B \subset (0,1)^2 \text{ with } \partial B \in C^{\infty} \text{ and }\partial B \text{ has curvature bounded by $v$} \}.
\end{align*}
Obviously $\mathcal{E}^{l,\infty}(v) \subset \mathcal{E}^{l,2}(v)$. From Theorem \ref{thm:0} we know that the best $N$-term approximation rate of wavelets for this class of functions is bounded from below by $N^{-(l+1)}$.

We can now state the approximation rate of shearlet systems for the class $\mathcal{E}^{l,\infty}(v)$. In the proof we will make use of Lemma \ref{lem:decayTheorem} which is given subsequent to the following theorem:

\begin{theorem} \label{thm:2}
Let $l\geq 1$, and $0\leq v<\infty$, and $u\in\mathcal{E}^{l,\infty}(v)$ and let $\mathcal{SH}(\phi,\sh, \tilde{\sh}, c) = (\sh_{\lambda})_{\lambda \in \Lambda}$ be a shearlet system with shearlet generator $\sh  = \left(\frac{\partial}{\partial x_1}\right)^l \theta$ such that with $L\in \N$, $L>l+29/4$, 
$\theta\in C^L(\R^2)$ has compact support, is separable and
\begin{compactenum}[(i)]
 \item $|\hat{\theta}(\xi)| \lesssim \min(1, |\xi_1|^\tau) \min(1,|\xi_1|^{-\nu}) \min(1, |\xi_2|^{-\nu})$ \text{ and }
 \item $|\frac{\partial}{\partial \xi_2}\hat{\theta}(\xi)| \leq |h(\xi_1) (1+\frac{\xi_2}{\xi_1})^{-\nu}|$,
\end{compactenum}
where $\tau>5, \nu\geq 4, h\in L_1(\R)$. Assume that $\theta$ has $M\geq 2L$ vanishing moments in $x_1$-direction. 
Further assume that $(\sh_{\lambda})_{\lambda \in \Lambda}$ forms a frame. Then for every $\epsilon >0$ 
\begin{align}
\|u-\sum_{\lambda \in E_N} \langle u, \sh_\lambda \rangle \sh_\lambda^d \|_{L_2(\R^2)}^2 \lesssim N^{-2l-\frac{7}{2} + \epsilon},
\end{align}
where $E_N \subset \Lambda$ contains the indices of the $N$ largest coefficients $|\langle u, \sh_\lambda \rangle|$ and $\sh_\lambda^d$ is the canonical dual frame element of $\sh_\lambda$. 
\end{theorem}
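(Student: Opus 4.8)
The plan is to follow the architecture of the proof of Theorem \ref{thm:1}, using that the two bottlenecks of that argument are now removed by the extra regularity, and replacing the crude transversal bound by Lemma \ref{lem:decayTheorem}. First I would reduce, exactly as in \eqref{eq:decayForDeriv}, to the coefficients of the cartoon derivative itself. Writing $u_\iota=(\partial/\partial x_1)^l u\in\mathcal{E}^\infty(v)$ (respectively the $x_2$-derivative for $\iota=-1$), integration by parts against $\sh=(\partial/\partial x_1)^l\theta$ and the compact support of $\theta$ yield $|\langle u,\sh_{j,k,m,\iota}\rangle|\lesssim 2^{-lj}|\langle u_\iota,\theta_{j,k,m,\iota}\rangle|$. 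Thus every bound below is obtained from a bound on the smooth molecule $\theta$ tested against the $C^\infty$ cartoon $u_\iota$, carrying the extra factor $2^{-lj}$.

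Next I would use the same partition $\Lambda=\Lambda^0\cup\Lambda^1\cup\Lambda^{2,a}\cup\Lambda^{2,b}$ with the same cardinalities $\#\Lambda^1_j\lesssim 2^{2j}$, $|\Lambda^{2,a}_j|\lesssim 2^{j/2}$, $|\Lambda^{2,b}_j|\lesssim 2^{3j/2}$. The target is to show that $(|\langle u,\sh_\lambda\rangle|)_{\lambda\in\Lambda}\in\ell^p$ for every $p>p_0:=1/(l+9/4)$: Stechkin's lemma \eqref{eq:Stechkin} applied to this sequence with $s=1/p-1/2=l+7/4-\epsilon/2$ then gives $\sum_{n\ge N}c(u)^*_n\lesssim N^{-2s}=N^{-2l-7/2+\epsilon}$, and the dual-frame identity \eqref{eq:frameProp} turns this into the asserted $L_2$ estimate. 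The parts $\Lambda^0$ and $\Lambda^{2,a}$ cost nothing new: $\Lambda^0$ is finite, while the aligned coefficients still satisfy $|\langle u,\sh_\lambda\rangle|\lesssim 2^{-(l+3/4)j}$, so $\sum_j 2^{j/2}2^{-(l+3/4)pj}<\infty$ already for $p>2/(4l+3)$, and since $2/(4l+3)<p_0$ for $l\ge 1$ the aligned term is summable with room to spare and is not binding.

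The new hypotheses are consumed on $\Lambda^1$ and $\Lambda^{2,b}$. On $\Lambda^1$ the shearlet meets no discontinuity, so $u_\iota$ is genuinely $C^\infty$ on the support; the $C^L$ regularity of $\theta$ together with its $M\ge 2L$ vanishing moments forces $|\langle u_\iota,\theta_\lambda\rangle|$ to decay like a coefficient of a $C^L$ function, and because $L>l+29/4$ the full coefficients decay fast enough that $\sum_{\Lambda^1}|\langle u,\sh_\lambda\rangle|^{p}<\infty$ for every $p>p_0$. This is precisely what makes the former bottleneck \eqref{eq:bottleneck} disappear. For $\Lambda^{2,b}$, the shearlets crossing $\gamma_\iota$ non-tangentially, I would invoke Lemma \ref{lem:decayTheorem} in place of the estimate used in Theorem \ref{thm:1}: it supplies an angle-dependent decay in the shear parameter which, summed over the admissible shears and translates at each scale against the count $2^{3j/2}$, is summable in $\ell^p$ for all $p>p_0$. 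This is the term that fixes the final rate, and it is exactly here that the amount $L>l+29/4$ of smoothness (with $M\ge 2L$) is needed, so that the improved transversal decay suffices to bring $\Lambda^{2,b}$ down to the critical exponent rather than to the weaker exponent of Theorem \ref{thm:1}.

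The main obstacle will be the $\Lambda^{2,b}$ estimate, of which everything else is essentially bookkeeping. One must (a) establish the sharp non-tangential decay of Lemma \ref{lem:decayTheorem}, trading the $C^L$ smoothness and the vanishing moments of $\theta$ for decay in the shear index while controlling the curvature of $\gamma_\iota$ inside each shearlet support, and (b) carry out the triple summation over scale, shear and translation and verify that it is critical precisely at $p_0=1/(l+9/4)$, the $\epsilon$ reflecting the logarithmically marginal nature of this sum. Checking that $L>l+29/4$ is exactly the regularity that forces both $\Lambda^1$ and $\Lambda^{2,b}$ below the aligned contribution, so that $p_0$ governs the whole sequence, is the delicate point of the argument.
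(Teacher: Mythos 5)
Your overall skeleton (the reduction \eqref{eq:decayForDeriv}, the $\ell^p$ target with critical exponent $p_0=1/(l+9/4)$, Stechkin, and the dual frame identity) matches the paper, and your treatment of $\Lambda^0$, $\Lambda^1$ and the aligned set $\Lambda^{2,a}$ is essentially the paper's. But there is a genuine gap in your Part for the non-aligned intersecting indices: you keep the two-way split of Theorem \ref{thm:1} (aligned versus everything else, with count $2^{3j/2}$) and propose to apply Lemma \ref{lem:decayTheorem} to \emph{all} non-aligned indices, crediting it with an ``angle-dependent decay in the shear parameter.'' The lemma supplies no such thing: it gives a single bound $C_u\bigl(2^{-(P+1)j/2}+2^{-(L+1)j/2+3j/4}\bigr)$ under the hypothesis $d_{[-\pi/2,\pi/2]}\bigl(\arctan((2^{-j/2}k)^\iota),\rho_0\bigr)\geq\beta>0$ for a \emph{fixed} $\beta$, and its proof (local graph parametrization $x_1\mapsto E(x_1)$ with slope controlled by the angular separation, followed by Taylor expansions whose coefficients involve derivatives of $E$) degrades as $\beta\to 0$. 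Non-aligned indices include shearlets whose direction is within a few multiples of $2^{-j/2}$ of the tangent, so no fixed $\beta$ works for them, and the lemma is simply inapplicable there. Quantitatively this matters: for those near-tangent indices the only available bound is the standard one $2^{-(9/4+l)j}$, and against your cardinality $2^{3j/2}$ the sum $\sum_j 2^{3j/2}2^{-p(9/4+l)j}$ converges only for $p>\tfrac{3}{2}p_0$, which via Stechkin yields a rate no better than $N^{-\frac{4}{3}l-2}$, strictly weaker than the claimed $N^{-2l-7/2+\epsilon}$. So the assertion that your $\Lambda^{2,b}$ sum is in $\ell^p$ for all $p>p_0$ is not just unproven but false for the decomposition you chose.

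The paper avoids this by splitting $\Lambda^2$ into \emph{three} sets, not two: the aligned set $\Lambda^{2,a}$ (count $2^{j/2}$, bound $2^{-(3/4+l)j}$); an intermediate set $\Lambda^{2,b}$ of non-aligned indices whose direction is within $\pi/4$ of the tangent, for which the \emph{refined cardinality bound} $|\Lambda^{2,b}_j|\lesssim 2^j$ is claimed and the old estimate $2^{-(9/4+l)j}$ is kept (Lemma \ref{lem:decayTheorem} is not used there at all); and the transversal set $\Lambda^{2,c}$ with angular separation at least $\pi/4$ (count $2^{3j/2}$), which is exactly where Lemma \ref{lem:decayTheorem} is invoked with the fixed value $\beta=\pi/4$, $P=\lceil l+23/4\rceil$, $L\geq\lceil l+29/4\rceil$, giving $|\langle u,\sh_{j,k,m,\iota}\rangle|\lesssim 2^{-\frac{3}{2}(l+\frac{9}{4})j}$. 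Each of these three sums is then critical precisely at $p_0$, which is what produces the rate $N^{-2l-7/2+\epsilon}$. The missing idea in your proposal is this intermediate set with its improved count; without it, the near-tangent non-aligned shearlets dominate and cap the rate. A minor additional inaccuracy: on $\Lambda^1$ the decay $2^{-2(l+9/4)j}$ comes from the vanishing moments of $\theta$ (only $M>l+9/2$ is needed) played against the $C^\infty$ smoothness of $u_\iota$, not from the $C^L$ smoothness of $\theta$; the hypotheses $\theta\in C^L$ and $M\geq 2L$ are consumed in Lemma \ref{lem:decayTheorem}, i.e., on $\Lambda^{2,c}$.
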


Before we can present the proof of Theorem \ref{thm:2} we require the following improved estimate for shearlet elements that intersect a discontinuity curve of a cartoon-like function non-tangentially:

\begin{lemma} \label{lem:decayTheorem}
Let $\sh = \sh^1 \otimes \phi^1$ be a separable shearlet where $\sh^1, \phi^1 \in C^L(\R^2) \cap L_2(\R^2)$ are compactly supported and $\sh^1$ has $M\in \N$ vanishing moments.
Let $u = g_1 + \chi_B g_2$ with $g_1,g_2 \in C^P(\R^2)$ and $B \subset (0,1)^2$ with $\gamma = \partial B \in C^{R}$ and $\gamma$ has bounded curvature. Let $R\geq L$ and $L + P\leq M$.

Let $(j,k,m, \iota) \in \Lambda$ such that there exists $t\in \suppp \sh_{j,k,m, \iota}\cap \gamma$ such that the normal $n(t)$ of $\gamma$ at $t$ obeys $n(t) = \pm(\cos \rho_0, \sin \rho_0)$ for some $\rho_0 \in [-\pi/2,\pi/2]$ and $d_{[-\pi/2, \pi/2]}(\arctan((2^{-j/2}k)^\iota), \rho_0) \geq  \beta>0$.
Then we have 
\begin{equation*}
|\left \langle \sh_{j,k,m, \iota}, u\right \rangle| \leq C_u( 2^{-(P+1) \frac{j}{2}} + 2^{-(L+1)\frac{j}{2} + \frac{3}{4}j}),
\end{equation*}
where $C_u$ is a constant depending only on $u$.
\end{lemma}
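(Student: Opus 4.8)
The plan is to estimate the inner product $\langle \sh_{j,k,m,\iota}, u\rangle$ by splitting $u = g_1 + \chi_B g_2$ into its smooth part and its jump part, and handling each separately. Without loss of generality I take $\iota = 1$ (the case $\iota = -1$ follows by the symmetry $\tilde\psi(x_1,x_2) = \psi(x_2,x_1)$ built into the system). First I would dispose of $g_1$: since $g_1 \in C^P$ and $\sh^1$ has $M \geq L+P$ vanishing moments, repeated integration by parts in the $x_1$-variable against $g_1$ gains a factor from each vanishing moment, while the anisotropic scaling $S_k A_j$ contributes the appropriate powers of $2^{j}$ and $2^{j/2}$. One should verify that the non-tangential condition $d_{[-\pi/2,\pi/2]}(\arctan(2^{-j/2}k), \rho_0) \geq \beta$ guarantees the support of $\sh_{j,k,m,1}$ meets $\gamma$ transversally, so that no cancellation is lost; the resulting bound for the smooth part will sit comfortably below $2^{-(P+1)j/2}$.

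The heart of the argument is the jump term $\chi_B g_2$. The plan is to localize near the point $t$ where the support meets $\gamma$, and to exploit that $\gamma$ is non-tangential to the shearing direction of the element. Concretely, I would rotate and reparametrize so that, after applying the change of variables induced by $S_k A_j$, the discontinuity curve is a graph $x_2 = e(x_1)$ that crosses the (long, thin) support of $\psi$ at an angle bounded away from the tangential direction by $\beta$. Because the curve is transversal, as we integrate in $x_1$ the jump contributes a boundary term whose $x_1$-extent is controlled, and one can integrate by parts $L$ times in the variable transverse to the jump using the $C^L$ smoothness of $\sh^1$ and $\phi^1$, picking up a factor $2^{-(L+1)j/2}$ from the $2^{j/2}$-direction; the extra $2^{3j/4}$ is the $L_2$-normalization constant of the shearlet, which accounts for the $+3j/4$ in the exponent of the second term.

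I would combine the two estimates and absorb all constants depending on $\|g_1\|_{C^P}$, $\|g_2\|_{C^P}$, the curvature bound, and $\beta$ into the single constant $C_u$, yielding
\[
|\langle \sh_{j,k,m,\iota}, u\rangle| \leq C_u\big(2^{-(P+1)j/2} + 2^{-(L+1)j/2 + 3j/4}\big),
\]
as claimed. The hypotheses $R \geq L$ (so the curve is smooth enough to integrate by parts along) and $L+P \leq M$ (so the vanishing moments suffice for the smooth part) are exactly what make both integrations by parts legitimate.

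The main obstacle will be the jump term, and specifically making the transversality quantitative. The naive estimate of an element crossing a discontinuity gives only the standard $2^{-3j/4}$ decay (as in Proposition 2.2 of \cite{KLcmptShearSparse2011}); the improvement here comes entirely from the fact that the separation $\beta$ lets us integrate by parts \emph{across} the jump rather than getting stuck with a single non-smooth boundary crossing. I expect the delicate bookkeeping to be in tracking how the curvature of $\gamma$ interacts with the anisotropic box after the $S_k A_j$ change of variables: one must confirm that over the $O(2^{-j})\times O(2^{-j/2})$ essential support the curve stays within the transversal regime (this is where bounded curvature and the $\beta$-separation are both used), so that the transverse integration by parts can be carried out uniformly and the resulting bound does not secretly depend on $j$ through a shrinking transversality angle.
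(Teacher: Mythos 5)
There is a genuine gap, and it sits exactly where you predicted the "heart of the argument" would be: your treatment of the jump term. You propose to gain the factor $2^{-(L+1)j/2}$ by integrating by parts $L$ times \emph{in the variable transverse to the jump}, i.e.\ across the discontinuity. This cannot work: $\chi_B g_2$ is not differentiable across $\gamma$, so each integration by parts in that direction either requires derivatives of $u$ that do not exist (producing non-decaying boundary terms supported on $\gamma$, the distributional derivative of $\chi_B$ being a surface measure), or else moves derivatives onto the factor $\phi^1(2^{j/2}\,\cdot\,)$, which \emph{costs} a factor $2^{j/2}$ per derivative rather than gaining $2^{-j/2}$ (and $\phi^1$, being the non-oscillatory factor, has no vanishing moments, so its antiderivatives are not even compactly supported). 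The standard obstruction you mention — "getting stuck with a single non-smooth boundary crossing" — is not circumvented by transversality in this way; transversality enters quite differently.

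The paper's actual mechanism is this: with $k=0$, $\iota=1$, the $\beta$-separation forces the curve to be a graph $x_2 = E(x_1)$ of \emph{bounded slope} over the shearlet's oscillatory ($x_1$-) direction. One then applies the shear change of variables $x_2 \mapsto x_2 - E(x_1)$, which flattens $\gamma$ to a horizontal line. After this substitution the integrand $u(x_1, x_2 - E(x_1))$ is $C^P$ \emph{in $x_1$ for each fixed $x_2$} — the jump now lives purely in $x_2$ and is never differentiated. All decay then comes from oscillation \emph{along} the jump, not across it: one Taylor-expands $u(x_1,x_2-E(x_1))$ in $x_1$ to order $P$ (error $\lesssim 2^{-(P+1)j/2}$, the first term of the bound), Taylor-expands the deformed factor $x_1 \mapsto \phi^1(2^{j/2}(x_2 - E(x_1)))$ to order $L$ (each $x_1$-derivative costs $2^{j/2}$, giving error $\lesssim 2^{-(L+1)j/2+3j/4}$, the second term), and then the remaining main term is a polynomial in $x_1$ of degree $\leq L+P \leq M$, annihilated exactly by the vanishing moments of $\sh^1$. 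Note also that your architecture misattributes the two terms in the conclusion: they are not "smooth part" and "jump part" respectively — the paper does not split $u = g_1 + \chi_B g_2$ at all, and both terms arise from the two Taylor remainders in the single unified argument (piece-smoothness $P$ versus shearlet smoothness $L$). Your smooth-part estimate and the final bookkeeping of constants are fine, but without the flattening substitution the claimed gain on the jump term has no valid derivation.
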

\begin{proof}
Assume w.l.o.g. that $m = 0$, $k = 0$, $\iota = 1$, for general $|k| \leq 2^{j/2}$ one can apply a transformation to revert back to $k = 0$. The proof for the case $\iota = -1$ is identical to $\iota = 1$. Since $k = 0$, $\iota = 1$ we have that $\rho_0 \not \in (-\beta,\beta)$ and hence locally in a neighborhood of $t = (t_1,t_2)$, there exists $E:\R\to \R$ such that $\gamma$ is given by $x_1\mapsto (x_1+t_1,E(x_1)+t_2)$. $E$ is bounded in a neighborhood of $t$ independent of the chosen $t$ since its slope is bounded due to the constraint on $\rho_0$. Furthermore $E$ is $R$-times continuously differentiable.
Now we can apply the transformation theorem to the above equation to obtain
\begin{align*}
 \left \langle \sh_{j,0,0, 1}, u\right \rangle = \int_{\R^2}\sh_{j,0,0, 1} (x_1, x_2 - E(x_1)) u(x_1, x_2 - E(x_1)) dx. 
\end{align*}
The discontinuity curve of $u$ is locally given by $x \mapsto (x_1+t_1, t_2)$ and hence $u(x_1, x_2 - E(x_1))$ is $C^{P}$ for fixed $x_2$. We can also assume, that for some $c>0$ we have $\suppp \sh_{j,0,0,1} \subset A_j^{-1}([-c,c]^2)$.
Hence we can replace $(x_1, x_2)\mapsto u(x_1, x_2 - E(x_1))$ by a $P$-th order Taylor approximation $U$ to obtain
\begin{equation*}
|\int_{A_j^{-1}([-c,c]^2)}\sh_{j,0,0, 1} (x_1, x_2 - E(x_1)) u(x_1, x_2 - E(x_1)) dx - \int_{A_j^{-1}([-c,c]^2 )}\sh_{j,0,0, 1} (x_1, x_2 - E(x_1)) U(x_1,x_2) dx| \lesssim 2^{-(P+1) \frac{j}{2}}
\end{equation*}
Now we decompose $\sh_{j,0,0, 1}(x_1, x_2 - E(x_1))  = 2^{j/2}\sh^1(2^j x_1 )2^{\frac{j}{4}}\phi^1(2^{\frac{j}{2}} (x_2 -  E(x_1)))$. Let $H(\cdot, x_2)$ be a $L$-th order Taylor approximation of $x_1 \mapsto \phi^1(2^{\frac{j}{2}}(x_2 - E(x_1))) )$, then
\begin{align*}
&\int_{A_j^{-1}([-c,c]^2 )} 2^{\frac{j}{2}}\sh^1(2^j x_1)2^{\frac{j}{4}} \phi^1(2^{\frac{j}{2}} (x_2 - E(x_1))) U(x_1, x_2 ) dx\\ 
=&\int_{A_j^{-1}([-c,c]^2 )} 2^{\frac{j}{2}}\sh^1(2^j x_1 )2^{\frac{j}{4}} H(x_1,x_2) U(x_1, x_2) dx +O(2^{-(L+1)\frac{j}{2} + \frac{3}{4}j}).
\end{align*}
By construction $H U$ is a polynomial of order $L + P$. Since $\sh^1$ has $M\geq L+P$ vanishing moments 
\begin{equation*}
\intRtwo 2^{\frac{j}{2}}\sh^1(2^j x_1 )2^{\frac{j}{4}} H(x_1,x_2) U(x_1, x_2) dx = 0,
\end{equation*}
if $L + P\leq M$.
We obtain 
\begin{equation*}
|\left \langle \sh_{j,0,0, 1}, u\right \rangle| \lesssim 2^{-(P+1) \frac{j}{2}} + 2^{-(L+1)\frac{j}{2} + \frac{3}{4}j}.
\end{equation*}
\end{proof}

\begin{proof}[Proof (of Theorem \ref{thm:2}):]
We use the same notation as in the proof of Theorem \ref{thm:1} for the functions $u_1 = \left(\frac{\partial}{\partial x_1} \right)^l u$ and $u_{-1} = \left(\frac{\partial}{\partial x_2} \right)^l u$ and the discontinuity curves $\gamma_\iota$, $\iota = -1,1$ and the outer normal $n^\iota(t)$ of $\gamma_\iota$ at $t$ for $\iota= {1,-1}$. We decompose our index set $\Lambda$. The sets $\Lambda^0$ and $\Lambda^1$, $\Lambda^{2}$ are defined exactly as in the proof of Theorem \ref{thm:1}. We do, however, decompose $\Lambda^{2}$ differently, into $\Lambda^{2,a}, \Lambda^{2,b}$ and $\Lambda^{2,c}$, to be defined below.

We denote by $\Lambda^{2,a}$ the indices in $\Lambda^{2}: = \Lambda \setminus (\Lambda^0 \cup \Lambda^1)$, such that for $(j,2^{j/2}s,m,\iota) \in \Lambda$ there exists some $t\in \suppp \sh_{j,2^{j/2}s,m,\iota} \cap \gamma_\iota$ such that we have  $s^{\iota}\in (\tan \rho_0-3\cdot2^{-j/2}, \tan \rho_0+3\cdot2^{-j/2})$ and $n^{\iota}(t)= \pm(\cos \rho_0, \sin \rho_0)$ for some $\rho_0 \in (-\pi/2,\pi/2)$. 

Additionally, there are non-aligned shearlet elements, where the angle between the shearlet and the discontinuity curve is less than $\pi/4$ and that touch the discontinuity curve, we will call these indices $\Lambda^{2,b}$. More precisely, these are indices $(j,2^{j/2}s,m,\iota)\in \Lambda$ that are not in $\Lambda^{0}\cup \Lambda^1 \cup \Lambda^{2,a}$ and for which there exists some $t\in \suppp \sh_{j,2^{j/2}s,m,\iota} \cap \gamma_\iota$ such that we have that $d_{[-\pi/2, \pi/2]}(\arctan(s^\iota), \rho_0) <  \pi/4$ and $n^{\iota}(t)= \pm(\cos \rho_0, \sin \rho_0)$ for some $\rho_0 \in (-\pi/2,\pi/2)$.

Lastly, there are indices, such that the angle between the curve and the shearlet is larger than $\pi/4$, i.e. $(j,2^{j/2}s,m,\iota)\in \Lambda \setminus (\Lambda^{0}\cup \Lambda^1 \cup \Lambda^{2,a} \cup \Lambda^{2,b})$ such that $d_{[-\pi, \pi]}(\arctan(s^\iota),\rho_0) \geq  \pi/4$ we will call these indices $\Lambda^{2,c}$. 

For $j\in \Z$ we denote by $\Lambda^1_j,\Lambda^{2,a}_j, \Lambda^{2,b}_j, \Lambda^{2,c}_j$ the indices of the respective index set, with scale equal to $j$. We have that $\#\{(j,k,m,\iota) \in \Lambda^1_j:\langle u, \sh_{j,k,m,\iota}\rangle \neq 0\} \lesssim 2^{2j}$. 
Furthermore,
\begin{equation*}
|\Lambda^{2,a}_j| \lesssim 2^{\frac{j}{2}}, \quad |\Lambda^{2,b}_j| \lesssim 2^{j}, \quad |\Lambda^{2,c}_j| \lesssim 2^{\frac{3}{2}j}.
\end{equation*}

Again we consider three different parts:

\textbf{Part 1: }$\Lambda^0$:

Since $|\Lambda^0|<\infty$ we certainly have
\begin{equation*}
\sum_{n\geq N} c(u)^*_n  \lesssim N^{-2(l+\frac{7}{4})}, 
\end{equation*}
where $c(u)^*$ denotes a non-increasing rearrangement of $(|\langle u, \sh_{j,k,m,\iota} \rangle|^2)_{(j,k,m,\iota)\in \Lambda^0}$.

\textbf{Part 2: }$\suppp \sh_{j,k,m,\iota} \cap \bigcup_{\iota = \{-1,1\}} \gamma_\iota = \emptyset:$

Using \eqref{eq:decayForDeriv} we obtain that 
\begin{align*}
|\langle u, \sh_{j,k,m,\iota} \rangle |\lesssim 2^{-l j} |\langle u_\iota, \theta_{j,k,m,\iota} \rangle|. 
\end{align*}
Since $u_\iota \in C^\infty$ and $\theta$ has $M > l+ 9/2$ vanishing moments in $x_1$-direction and $\suppp \theta_{j,k,m,\iota}$ is of length $2^{-j}$ in the direction indicated by $\iota$ we can estimate
$$
|\langle u, \sh_{j,k,m,\iota} \rangle |\lesssim 2^{-2(l+ \frac{9}{4})j}. 
$$
Let $\epsilon >0$ and $p = (l+9/4-\epsilon)^{-1}$. From the cardinality of $\Lambda^{1}_j$ we have that 
$$
\sum_{\Lambda^{1}} |\langle u, \sh_{j,k,m,\iota}\rangle|^p  \leq \sum_{j\in \N} 2^{2j}  2^{-2j\frac{(l+ \frac{9}{4})}{(l+\frac{9}{4}-\epsilon)}}<\infty.
$$
Using Stechkin's lemma \eqref{eq:Stechkin}, we obtain that $\sum_{n\geq N} c(u)^*_n  \lesssim N^{-2 (l + 7/4 - \epsilon)}$, where $c(u)^*$ denotes a non-increasing rearrangement of $(|\langle u, \sh_{j,k,m,\iota} \rangle|^2)_{(j,k,m,\iota)\in \Lambda^1}$. 

\textbf{Part 3: }$\suppp \sh_{j,k,m,\iota} \cap \bigcup_{\iota = \{-1,1\}} \gamma_\iota \neq \emptyset:$

As already established in the proof of Theorem \ref{thm:1} we have for $(j,k,m,\iota) \in \Lambda^{2,a}$
\begin{equation*}
|\langle u, \sh_{j,k,m,\iota}\rangle| \lesssim 2^{-(\frac{3}{4}+l)j}
\end{equation*}
and for $(j,k,m,\iota) \in \Lambda^{2,b}$ we have
\begin{equation*}
|\langle u, \sh_{j,k,m,\iota}\rangle| \lesssim 2^{-(\frac{9}{4}+l)j}.
\end{equation*}
We continue with the coefficients of $\Lambda^{2,c}$. First of all we invoke \eqref{eq:decayForDeriv} and obtain
\begin{align*}
 |\langle u, \sh_{j,k,m,\iota}\rangle | \lesssim 2^{-l j} |\langle u_\iota, \theta_{j,k,m,\iota} \rangle|. 
\end{align*}
Next we want to estimate $|\langle u_\iota, \theta_{j,k,m,\iota} \rangle|$ using Lemma \ref{lem:decayTheorem}. 
Since $u_\iota\in \mathcal{E}^\infty(v)$ we have that 
$$
u_\iota = g_1 + \chi_B g_2, \text{ with } g_1,g_2 \in C^{P} 
$$
for all $P \in \N$ and in particular for $P = \lceil l + \frac{23}{4}\rceil$. In addition we have that $\partial D \in C^\infty$. Furthermore $\theta \in C^{L}$ has $M \geq 2L$ vanishing moments and $L\geq \lceil l + \frac{29}{4}\rceil$. Hence we can apply Lemma \ref{lem:decayTheorem} to obtain that
\begin{equation*}
|\langle u_\iota, \theta_{j,k,m,\iota} \rangle |\lesssim 2^{-(l+\frac{23}{4} + 1) \frac{j}{2} } .
\end{equation*}
Consequently 
\begin{align*}
 |\langle u, \sh_{j,k,m,\iota}\rangle | \lesssim 2^{-l j} 2^{-(l+\frac{27}{4}) \frac{j}{2} } = 2^{\frac{3}{2}(l + \frac{9}{4})}. 
\end{align*}
Invoking the sizes of $\Lambda^{2,a}_j, \Lambda^{2,b}_j, \Lambda^{2,c}_j$ we can compute, that 
\begin{align}
\sum_{\Lambda^{2}} |\langle u, \sh_{j,k,m,\iota}\rangle|^p \leq& \sum_{\Lambda^{2,a}} |\langle u, \sh_{j,k,m,\iota}\rangle|^p +  \sum_{\Lambda^{2,b}} |\langle u, \sh_{j,k,m,\iota}\rangle|^p+  \sum_{\Lambda^{2,c}} |\langle u, \sh_{j,k,m,\iota}\rangle|^p\nonumber \\
\lesssim &\sum_{j\in \N} 2^{j} 2^{-p(\frac{9}{4}+l)j} +  \sum_{j\in \N} 2^{\frac{j}{2}} 2^{-p(\frac{3}{4}+l)j} +  \sum_{j\in \N} 2^{\frac{3 j}{2}} 2^{-\frac{3}{2} p(l+\frac{9}{4})j}<\infty.
\end{align}
Another application of Stechkin's lemma yields that $\sum_{n\geq N} c(u)^*_n  \lesssim N^{-2 (l + 7/4 - \epsilon)}$, where $c(u)^*$ denotes a non-increasing rearrangement of $(|\langle u, \sh_{j,k,m,\iota} \rangle|^2)_{(j,k,m,\iota)\in \Lambda^2}$. 
Combining the estimates from Part 1, 2, and 3 yields with the dual frame characterization \eqref{eq:frameProp} that
\begin{align*}
\|u-\sum_{\lambda \in E_N} \langle u, \sh_\lambda \rangle \sh_\lambda^d \|_{L_2(\R^2)}^2 \lesssim \sum_{n\geq N} c(u)^*_n  \lesssim N^{-2 (l + \frac{7}{4} - \epsilon)},
 \end{align*}
where $c(u)^*_n$ denotes a non-increasing rearrangement of $(|\langle u, \sh_{\lambda} \rangle|^2)_{\lambda \in \Lambda}$.
\end{proof}

\section{Acknowledgments}
The author would like to thank Wang-Q Lim and Reinhold Schneider for inspiring discussions. The author was supported by the DFG Collaborative Research Center TRR 109 "Discretization in Geometry and Dynamics".

\small
\bibliographystyle{amsplain}
\bibliography{references}

\end{document}